\theoremstyle{plain}
\newtheorem{theorem}{Theorem}[section]
\newtheorem{proposition}[theorem]{Proposition}
\newtheorem{lemma}[theorem]{Lemma}
\newtheorem{conjecture}[theorem]{Conjecture}
\theoremstyle{definition}
\newtheorem{definition}[theorem]{Definition}
\theoremstyle{remark}
\par\end{flushleft}}
\title{\bf A counterexample to the Berger--Coburn conjecture}
\author{{\bf Sam Looi}}
\date{\small\today}
\begin{document}
\maketitle

\begin{abstract}
Berger and Coburn proposed an endpoint boundedness criterion for Toeplitz operators on the Bargmann--Fock space in which the decisive quantity is the heat transform of the symbol at the borderline time $t=\tfrac14$, the time naturally singled out by the Weyl calculus under the Bargmann transform.  We show that this criterion fails for general measurable symbols in every complex dimension $n\ge 1$.  Concretely, we construct a measurable symbol $g\in L^2(\mathbb C^n,d\mu)$ such that $gk_a\in L^2(d\mu)$ for every normalized reproducing kernel $k_a$, and the associated Toeplitz form extends to a bounded operator on $H^2(\mathbb C^n,d\mu)$, but the heat transform $g^{(1/4)}$ is unbounded on $\mathbb C^n$.  The example is obtained by summing translated bounded ``blocks'' whose Toeplitz norms are summable while their $t=\tfrac14$ heat profiles have fixed size.  The blocks are produced by combining a Hilbert--Schmidt estimate for Weyl quantization with the Bargmann correspondence between Weyl and Toeplitz operators.
\end{abstract}

\section{Introduction}

Toeplitz operators on the Bargmann--Fock space provide a model in which holomorphic function theory, operator theory, and pseudodifferential calculus can be compared at the level of explicit kernels.  In particular, under the Bargmann transform, Toeplitz operators may be viewed as concrete counterparts of Weyl operators on $L^2(\mathbb R^n)$; the passage between symbols on the Toeplitz side and symbols on the Weyl side is governed by heat flow \cite{BC94,Fo89}.  A basic analytic problem in this setting is to decide when a Toeplitz operator defined by a possibly unbounded symbol extends boundedly to the whole Bargmann space.

Fix $n\ge 1$.  Let $H^2(\mathbb C^n,d\mu)$ be the Bargmann space of entire functions square-integrable with respect to the Gaussian measure
\[
d\mu(z)=(2\pi)^{-n}e^{-|z|^2/2}\,dv(z),
\]
where $dv$ is Lebesgue measure on $\mathbb C^n\simeq \mathbb R^{2n}$.
Let $P\colon L^2(d\mu)\to H^2(d\mu)$ be the orthogonal projection with reproducing kernel $K(z,w)=e^{z\cdot\overline w/2}$. Following \cite[\S 1]{BC94}, if $g$ is measurable and satisfies
\[
gK(\cdot,a)\in L^2(d\mu)\qquad\text{for every }a\in\mathbb C^n,
\]
one can define a densely defined Toeplitz operator by
\[
T_g f=P(gf),
\qquad
(T_g f)(z)=\int_{\mathbb C^n} g(w)\,K(z,w)\,f(w)\,d\mu(w),
\]
with natural domain $\{f\in H^2(d\mu): gf\in L^2(d\mu)\}$.
The guiding question is to characterize those symbols $g$ for which $T_g$ extends to a bounded operator on $H^2(\mathbb C^n,d\mu)$.

\subsection*{Heat flow and the endpoint problem}
A convenient way to package information about a symbol is via Gaussian convolution.
For $t>0$, the heat transform of $g$ is
\begin{equation}\label{eq:intro-heat}
g^{(t)}(a)=(4\pi t)^{-n}\int_{\mathbb C^n} g(w)\,e^{-|w-a|^2/(4t)}\,dv(w),
\end{equation}
whenever the integral is absolutely convergent. Let $k_a$ be the normalized reproducing kernel at $a$.
Then the Berezin transform identity gives
\begin{equation}\label{eq:intro-berezin}
(T_g k_a,k_a)=g^{(1/2)}(a),
\end{equation}
so boundedness of $T_g$ forces boundedness of $g^{(1/2)}$. For positive symbols $g$ this necessary condition is also sufficient: $T_g$ is bounded if and only if $g^{(1/2)}$ is bounded \cite{BC94}.

For general complex-valued symbols, Berger and Coburn established two complementary norm estimates that leave only a single endpoint unresolved \cite{BC94}. In the normalization \eqref{eq:intro-heat}, for every $t\in(\tfrac14,1)$ there exists $C(t)$ such that
\begin{equation}\label{eq:intro-est-upper}
\|g^{(t)}\|_\infty \le C(t)\,\|T_g\|,
\end{equation}
and for every $t\in(0,\tfrac14)$ there exists $C(t)$ such that
\begin{equation}\label{eq:intro-est-lower}
\|T_g\| \le C(t)\,\|g^{(t)}\|_\infty .
\end{equation}
Thus the boundedness problem reduces to the borderline time $t=\tfrac14$.

The time $t=\tfrac14$ is not an artifact of the estimates: it is the value selected by the Weyl
calculus in the Bargmann model of \cite[\S 5]{BC94}.
With the Gaussian measure fixed as above and with Weyl quantization normalized as in
\cite[\S 5]{BC94}, conjugation by the Bargmann transform identifies suitable Weyl operators with Toeplitz operators in such a way that the Weyl symbol corresponds to the heat transform $g^{(1/4)}$ \cite{BC94,Fo89}.  This Weyl-symbol viewpoint has remained active in later microlocal work on Toeplitz operators; see, for example, \cite{CHS2019,CHSW2021,CHS2023,Xiong2023} for results in metaplectic and related quadratic settings.

Motivated by \eqref{eq:intro-est-upper}--\eqref{eq:intro-est-lower}, Berger and Coburn asked whether boundedness of the borderline heat transform provides the missing criterion. 
In the formulation of \cite[\S 6]{BC94}:

\begin{conjecture}[Berger--Coburn {\cite[\S 6]{BC94}}]\label{conj:BC}
Assume that $g$ is measurable and that $gk_a\in L^2(d\mu)$ for every $a\in\mathbb C^n$.
Then $T_g$ extends to a bounded operator on $H^2(\mathbb C^n,d\mu)$ if and only if
$g^{(1/4)}$ is bounded on $\mathbb C^n$.\footnote{We use ``extends to'' to account for the dense domain of $T_g$ noted in \cite[\S 1]{BC94}.}
\end{conjecture}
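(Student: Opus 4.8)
Since Conjecture~\ref{conj:BC} asserts in particular that boundedness of $T_g$ forces boundedness of $g^{(1/4)}$, the plan is to refute it by exhibiting a measurable $g$ with $gk_a\in L^2(d\mu)$ for every $a$, with $T_g$ extending to a bounded operator on $H^2(\mathbb C^n,d\mu)$, but with $g^{(1/4)}$ unbounded on $\mathbb C^n$. The engine is the Bargmann dictionary of \cite{BC94,Fo89}: conjugation by the Bargmann transform turns $T_g$ into the Weyl operator $\mathrm{Op}^w(g^{(1/4)})$, and the classical Hilbert--Schmidt identity $\norm{\mathrm{Op}^w(\sigma)}_{HS}=c_n\norm{\sigma}_{L^2(\mathbb R^{2n})}$ then yields $\norm{T_g}\le c_n\norm{g^{(1/4)}}_{L^2}$ whenever $g^{(1/4)}\in L^2$. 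Combined with the covariance $T_{g(\cdot-a)}=W_aT_gW_a^{*}$ under the Bargmann-side Heisenberg translations $W_a$, this reduces the task to a single \emph{block}: it suffices to produce, for arbitrarily small $\varepsilon>0$ and arbitrarily large $M$, a bounded symbol $b$ whose heat transform lies in $L^2$ with $\norm{b^{(1/4)}}_{L^2}\le\varepsilon$ yet $b^{(1/4)}(0)\ge M$; one then sums translates of a sequence of such blocks pushed out to infinity.

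First I would build the block by prescribing its heat profile rather than the symbol. Take the target $\sigma:=b^{(1/4)}$ to have Fourier transform $\widehat\sigma(\xi)=c\,\chi(\xi/R)$, where $\chi\ge 0$ is a fixed smooth bump supported in the unit ball and $R$ is a large dilation parameter. Since $\widehat\sigma\ge 0$ we have $\norm{\sigma}_\infty=\sigma(0)\asymp cR^{2n}$ while $\norm{\sigma}_{L^2}\asymp cR^{n}$, so the ratio $\norm{\sigma}_\infty/\norm{\sigma}_{L^2}\asymp R^{n}$ and $c,R$ can be chosen to make $\norm{\sigma}_{L^2}$ as small and $\norm{\sigma}_\infty$ as large as we wish. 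The symbol is then recovered by running the heat flow backwards for time $\tfrac14$: put $\widehat b(\xi)=\widehat\sigma(\xi)e^{|\xi|^2/4}$, which is smooth and compactly supported in $\xi$, so $b$ is a bounded --- indeed Schwartz --- symbol with $b^{(1/4)}=\sigma$ by construction. For such $b$ the Toeplitz operator agrees with $\mathrm{Op}^w(\sigma)$ on the dense natural domain, hence on all of $H^2(d\mu)$, and the Hilbert--Schmidt bound gives $\norm{T_b}\le c_n\norm{\sigma}_{L^2}\le c_n\varepsilon$. (Such a $b$ is necessarily sign-changing, since a nonnegative Gaussian convolution cannot be the narrow spike $\sigma$ --- in keeping with the fact that for positive symbols the criterion does hold \cite{BC94}.)

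To assemble the counterexample, set $\varepsilon_j=2^{-j}$ and $M_j=j$, let $b_j$ be a block as above with $\norm{b_j^{(1/4)}}_{L^2}\le\varepsilon_j$ and $\sigma_j:=b_j^{(1/4)}$ satisfying $\sigma_j(0)\ge M_j$ and concentrated at scale $R_j^{-1}\le 1$ near the origin, and choose centers $a_j\in\mathbb C^n$ with $\abs{a_j}\to\infty$ so rapidly that: first, the Gaussian weight $e^{-|z|^2/2}$ (and each displaced Gaussian $\abs{k_a}^2\,d\mu$) beats the fast-growing sup-norms $\norm{b_j}_\infty^2$, so that $g:=\sum_j b_j(\cdot-a_j)$ lies in $L^2(\mathbb C^n,d\mu)$ and $gk_a\in L^2(d\mu)$ for every $a$; and, second, $\sum_{k\ne j}\abs{\sigma_k(a_j-a_k)}\le 1$ for all $j$ (possible since the $\sigma_k$ are increasingly localized near their centers and the $\varepsilon_k$ are summable, so separating the $a_j$ by a fixed distance already yields a uniform bound, and a uniform bound suffices as $M_j\to\infty$). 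Then $T_g=\sum_j W_{a_j}T_{b_j}W_{a_j}^{*}$ converges in operator norm, since $\norm{T_{b_j(\cdot-a_j)}}=\norm{T_{b_j}}\le c_n\varepsilon_j$ is summable, so $T_g$ extends to a bounded operator with $\norm{T_g}\le c_n\sum_j 2^{-j}<\infty$; on the other hand $g^{(1/4)}(a_j)=\sum_k\sigma_k(a_j-a_k)=\sigma_j(0)+\sum_{k\ne j}\sigma_k(a_j-a_k)\ge j-1\to\infty$, so $g^{(1/4)}$ is unbounded on $\mathbb C^n$. Hence $g$ contradicts Conjecture~\ref{conj:BC}, and the construction goes through in every dimension $n\ge 1$.

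The step I expect to be the real obstacle is the first one: pinning down the Bargmann--Weyl dictionary and the Hilbert--Schmidt normalization precisely in the conventions of \cite{BC94,Fo89}, and verifying that the identification $T_b=\mathrm{Op}^w(b^{(1/4)})$ is legitimate for the constructed $b$ (it is, because $b$ is Schwartz, but this requires an argument rather than a mere assertion). The remainder --- tuning the parameters $c_j,R_j$ and then choosing the centers $a_j$ so as to control simultaneously the $L^2(d\mu)$-membership of $g$, the condition $gk_a\in L^2$, and the separation that transfers unboundedness from the individual profiles $\sigma_j$ to $g^{(1/4)}$ --- is routine, but must be organized with some care given how fast $\norm{b_j}_\infty$ grows in $j$.
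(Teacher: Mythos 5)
Your construction is essentially the paper's: blocks whose heat profile at time $\tfrac14$ is a Fourier-localized bump $\sigma_R$ at scale $R$ (so $\|\sigma_R\|_\infty/\|\sigma_R\|_{L^2}\sim R^n$), backward heat flow to produce a Schwartz Toeplitz symbol, the Hilbert--Schmidt estimate for Weyl quantization to bound $\|T_{g_R}\|$ by $\|\sigma_R\|_{L^2}$, and a sum of rapidly separated translates with growing peak heights (your tuning of $c_j,R_j$ and the paper's fixed $\|a_R\|_\infty=1$ with external coefficient $c_m=m$ are only bookkeeping differences). The identification $T_b = B\,\mathrm{Op}^w(b^{(1/4)})\,B^{-1}$ that you flag as needing justification is supplied in the paper by showing both operators have Berezin transform $g_R^{(1/2)}=a_R^{(1/4)}$ and invoking injectivity of the Berezin transform on bounded operators.
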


\subsection*{Related work}
There is a large literature on boundedness and compactness of Toeplitz operators on Fock space;
general background can be found in \cite{ZhuFock,IsralowitzZhu2010Toeplitz}.
Heat-flow methods have also been developed for structured symbol classes, notably BMO-type symbols and
compactness criteria; see \cite{BCI2010,CoburnIsralowitzLi2011}.  Other approaches exploit additional
symmetry or algebraic structure, such as radial and invariant classes
\cite{GrudskyVasilevski2002RadialComponentEffects,EsmeralRozenblumVasilevski2019LinvariantFockCarleson}
or operator-algebraic perspectives \cite{Bauer2005PsiFrechetThesis,BauerFulscheRodriguezRodriguez2024OperatorsFockToeplitzAlgebra}.
In a different direction, the endpoint philosophy behind Conjecture~\ref{conj:BC} has been verified for
metaplectic Toeplitz operators, where the symbols are given by quadratic-exponential data and the Weyl
calculus can be made essentially explicit; see \cite{CHS2019,CHSW2021,CHS2023,Xiong2023}.

\subsection*{Main result}
The purpose of this paper is to show that Conjecture~\ref{conj:BC} fails for general measurable symbols. 
More precisely, we disprove the implication
\[
T_g\in B\big(H^2(\mathbb C^n,d\mu)\big)\ \Longrightarrow\ g^{(1/4)}\in L^\infty(\mathbb C^n)
\]
even for a symbol in the natural class $L^2(\mathbb C^n,d\mu)$.

\begin{theorem}[Counterexample to the Berger--Coburn conjecture]\label{thm:main}
There exists a measurable symbol $g$ on $\mathbb C^n$ such that:
\begin{enumerate}
\item $gk_a\in L^2(d\mu)$ for every $a\in\mathbb C^n$;
\item the Toeplitz operator $T_g$ extends to a bounded operator on $H^2(\mathbb C^n,d\mu)$;
\item the heat transform $g^{(1/4)}$ is unbounded on $\mathbb C^n$.
\end{enumerate}
\end{theorem}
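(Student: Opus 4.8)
The plan is to realize $g$ as a sum $g=\sum_{j\ge 1}g_j$ of ``blocks'' $g_j$ supported near widely separated centers $q_j\in\mathbb C^n$, each block innocuous as a Toeplitz symbol but carrying in its $t=\tfrac14$ heat profile a bump whose height $M_j$ tends to infinity. The device for controlling $\|T_{g_j}\|$ is the Bargmann correspondence of \cite[\S5]{BC94} and \cite{Fo89}: for a Schwartz symbol $g_j$, conjugation by the Bargmann transform identifies $T_{g_j}$ with the Weyl operator whose Weyl symbol is the heat transform $g_j^{(1/4)}$, so by the Hilbert--Schmidt identity for Weyl quantization
\[
\|T_{g_j}\|=\bigl\|\mathrm{Op}^W\!\bigl(g_j^{(1/4)}\bigr)\bigr\|\le\bigl\|\mathrm{Op}^W\!\bigl(g_j^{(1/4)}\bigr)\bigr\|_{\mathrm{HS}}=c_n\,\bigl\|g_j^{(1/4)}\bigr\|_{L^2(\mathbb C^n,dv)} .
\]
Everything then reduces to constructing, for each $j$, a symbol $g_j$ whose heat transform is pointwise large yet small in $L^2(dv)$, and to spacing the blocks so that their contributions interact negligibly.

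For a single block I would work backwards from the desired heat profile. Fix a Schwartz function $\psi$ on $\mathbb C^n\cong\mathbb R^{2n}$ with $\widehat\psi\in C_c^\infty$ and $\psi(0)=\|\psi\|_\infty=1$, and for parameters $M>0$, $R\ge 1$, $q\in\mathbb C^n$ put
\[
\sigma_{M,R,q}(w)=M\,\psi\!\bigl(R(w-q)\bigr),
\]
a bump of height $M$ at scale $R^{-1}$ about $q$, for which $\|\sigma_{M,R,q}\|_\infty=M$ while $\|\sigma_{M,R,q}\|_{L^2(dv)}=M R^{-n}\|\psi\|_{L^2}$. Since $\widehat{\sigma_{M,R,q}}$ is supported in $\{|\xi|\le R\}$, the function $\widehat{\sigma_{M,R,q}}(\xi)\,e^{|\xi|^2/4}$ is again $C_c^\infty$ and is therefore the Fourier transform of a Schwartz symbol $g_{M,R,q}$ with $g_{M,R,q}^{(1/4)}=\sigma_{M,R,q}$; moreover $g_{M,R,q}$ is bounded by $CMe^{R^2/4}$ and all its Schwartz seminorms are bounded by $Me^{R^2/4}$ times powers of $R$. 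Combining the displayed estimate with these computations gives
\[
\|T_{g_{M,R,q}}\|\le c_n\|\psi\|_{L^2}\,M R^{-n},\qquad \bigl\|g_{M,R,q}^{(1/4)}\bigr\|_\infty=M ,
\]
so a single block already separates the two quantities: choosing $R$ large relative to $M$ shrinks the Toeplitz norm while leaving the heat profile at height $M$.

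To assemble the example I would choose $M_j\uparrow\infty$ and $R_j$ with $R_j^{n}\gg M_j\,j$ (for instance $M_j=j$, $R_j=j^{3}$), so that $\sum_j M_j R_j^{-n}<\infty$, and then choose centers $q_j$ going to infinity so rapidly --- in terms of $M_j$, $e^{R_j^{2}}$, and the seminorms of $g_{M_j,R_j,q_j}$ --- that the following hold. First, $g:=\sum_j g_{M_j,R_j,q_j}$ converges to a locally bounded measurable symbol with $g\in L^2(\mathbb C^n,d\mu)$ and, more generally, $gf\in L^2(d\mu)$ for every polynomial $f$ and every reproducing kernel $K(\cdot,a)$ (so in particular (1) holds), because near $q_j$ only the $j$th term is non-negligible and the Gaussian weight $e^{-|z|^2/2}$ overwhelms the local size $\lesssim M_je^{R_j^2/4}$ of that term. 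Second, the translated bumps $\sigma_j:=g_{M_j,R_j,q_j}^{(1/4)}$ overlap negligibly, so $g^{(1/4)}=\sum_j\sigma_j$ satisfies $g^{(1/4)}(q_j)\ge M_j-1\to\infty$, which is (3). Third, on the dense domain of $T_g$ --- which contains the polynomials and the kernels $K(\cdot,a)$ --- one has $gf=\sum_j g_{M_j,R_j,q_j}f$ in $L^2(d\mu)$, hence $T_gf=\sum_j T_{g_{M_j,R_j,q_j}}f$; since $\sum_j\|\sigma_j\|_{L^2(dv)}<\infty$ one even has $g^{(1/4)}\in L^2(\mathbb C^n,dv)$, so $T_g$ extends to a bounded (in fact Hilbert--Schmidt) operator with $\|T_g\|\le c_n\|\psi\|_{L^2}\sum_j M_j R_j^{-n}<\infty$, which is (2).

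The heart of the matter, and the only genuinely quantitative point, is the single-block construction. The Hilbert--Schmidt bound forces the Weyl symbol $g_j^{(1/4)}$ to be small in $L^2$ while one needs it large in sup norm; a smooth heat transform reconciles these only by being a tall narrow spike, which in turn requires $g_j$ itself to be enormous compared with $g_j^{(1/4)}$, and such a spike is realizable as a genuine heat transform precisely because band-limitedness is preserved by --- and overwhelms --- the backward heat flow $\widehat{\sigma_j}\mapsto\widehat{\sigma_j}\,e^{|\xi|^2/4}$. Getting the three scalings $M_j$, $R_j$, $|q_j|$ mutually compatible is the crux; once the blocks are in hand, their wide separation makes membership in $L^2(d\mu)$, the condition $gk_a\in L^2$, and the additivity and boundedness of $T_g$ routine Gaussian tail estimates.
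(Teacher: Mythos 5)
Your proposal is correct in all its essentials and follows the same route as the paper: band-limited Schwartz bumps $\sigma=M\psi(R(\cdot-q))$ with $\widehat\psi\in C_c^\infty$ (the paper's $a_R=\widehat\Phi(-R\cdot)$ plays the role of your $\psi(R\cdot)$) are small in $L^2$ but height~$M$ in sup norm; backward heat flow on the band-limited Fourier side produces a Schwartz symbol $g_{M,R,q}$ with $g_{M,R,q}^{(1/4)}=\sigma$; the Bargmann--Weyl correspondence plus the Hilbert--Schmidt bound $\|\mathrm{Op}^W(a)\|_{HS}=(2\pi)^{-n/2}\|a\|_{L^2}$ give $\|T_{g_{M,R,q}}\|\lesssim MR^{-n}$; and widely separated translates with $M_j=j$, $R_j=j^3$ are summed. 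The paper spends its effort rigorously justifying the identification $T_{g_R}=B\,a_R(D,X)\,B^{-1}$ (via Berezin-transform injectivity, \cite[\S3, Prop.~3(8)]{BC94}) and the summation step (the condition $(\star)$ of Lemma~\ref{lem:sum}, verified by Schwartz-moment tail estimates for the non-compactly-supported blocks), both of which you compress into ``conjugation identifies'' and ``routine Gaussian tail estimates''; these are indeed the places where the bookkeeping is nontrivial, but your plan has all the right ingredients and the right relative scalings $R_j^n\gg M_j j$ and $|q_j|\gg e^{cR_j^2}$.
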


Because $g$ need not be bounded, the integral formula for $T_g$ is not \emph{a priori} meaningful on all
of $H^2(d\mu)$.  In our construction, $gk_a\in L^2(d\mu)$ for every $a$, so the Toeplitz matrix
coefficients
\[
(T_g k_a,k_b)=(gk_a,k_b)
\]
are well defined on the span of the kernel vectors.  We show that these coefficients define a bounded
operator on $H^2(d\mu)$, and we denote its bounded extension again by $T_g$.

\subsection*{Idea of the construction}
The counterexample is assembled from a family of ``blocks'' whose Toeplitz norms tend to zero while their
$t=\tfrac14$ heat profiles stay uniformly large.

On the Weyl side, for $a\in L^2(\mathbb R^{2n})$ we write $a(D,X)$ for the Weyl operator on $L^2(\mathbb R^n)$.
A basic input is the Hilbert--Schmidt estimate
\[
\|a(D,X)\|_{HS}=(2\pi)^{-n/2}\|a\|_{L^2(\mathbb R^{2n})}.
\]
We choose oscillatory symbols $a_R$ with $\|a_R\|_\infty=1$ but $\|a_R\|_2\simeq R^{-n}$, so that
$a_R(D,X)$ has operator norm $O(R^{-n})$.
Via the Bargmann transform, these Weyl operators correspond to bounded Toeplitz operators with symbols
$g_R$ satisfying
\[
g_R^{(1/4)}=a_R
\qquad\text{and}\qquad
\|T_{g_R}\|\lesssim R^{-n}.
\]
We then translate and rescale the blocks so that the Toeplitz norms remain summable, but the translates of the heat transforms develop peaks of increasing height at widely separated points.
A summation lemma allows one to pass from blockwise control to the heat transform of the full sum, and evaluating at the peak points forces unboundedness of $g^{(1/4)}$.

\section{Preliminaries}

For $a\in\mathbb C^n$, let
\[
k_a(z)=\frac{K(z,a)}{\sqrt{K(a,a)}}=\exp \Big(\frac{z\cdot \overline a}{2}-\frac{|a|^2}{4}\Big)
\]
be the normalized kernel vector. Then $\|k_a\|_{L^2(d\mu)}=1$ and
\begin{equation}\label{eq:ka-weight}
|k_a(w)|^2\,d\mu(w)=(2\pi)^{-n}e^{-|w-a|^2/2}\,dv(w).
\end{equation}

For a bounded operator $X$ on $H^2(d\mu)$, its Berezin transform is
\[
\widetilde X(a)=(Xk_a,k_a).
\]
For Toeplitz operators one has
\begin{equation}\label{eq:berezin-toeplitz}
\widetilde{T_g}(a)=(T_gk_a,k_a)=g^{(1/2)}(a),
\end{equation}
hence $\|T_g\|\ge \|g^{(1/2)}\|_\infty$; see \cite[p. 564]{BC94}.

\begin{lemma}\label{lem:heat-quarter-bound}
Let $g$ be measurable and assume $gk_a\in L^2(d\mu)$ for some fixed $a\in\mathbb C^n$. Then $g^{(1/4)}(a)$ exists as an absolutely convergent integral and
\begin{equation}\label{eq:heat-quarter-Cs}
|g^{(1/4)}(a)|\le 2^n\,\|gk_a\|_{L^2(d\mu)}.
\end{equation}
\end{lemma}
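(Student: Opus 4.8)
The plan is to identify \eqref{eq:heat-quarter-Cs} as an instance of the Cauchy--Schwarz inequality, once the $t=\tfrac14$ heat kernel has been rewritten in terms of the weight $|k_a|^2\,d\mu$ appearing in \eqref{eq:ka-weight}. First I would specialize the definition \eqref{eq:intro-heat} to $t=\tfrac14$: there $4\pi t=\pi$ and $1/(4t)=1$, so, formally,
\[
g^{(1/4)}(a)=\pi^{-n}\int_{\mathbb C^n} g(w)\,e^{-|w-a|^2}\,dv(w).
\]
Then, using \eqref{eq:ka-weight} in the form $e^{-|w-a|^2/2}\,dv(w)=(2\pi)^n\,|k_a(w)|^2\,d\mu(w)$ and multiplying through by the extra factor $e^{-|w-a|^2/2}$, one gets the pointwise identity of measures $e^{-|w-a|^2}\,dv(w)=(2\pi)^n\,e^{-|w-a|^2/2}\,|k_a(w)|^2\,d\mu(w)$, hence
\[
g^{(1/4)}(a)=2^{n}\int_{\mathbb C^n} g(w)\,e^{-|w-a|^2/2}\,|k_a(w)|^2\,d\mu(w),
\]
valid as soon as either side is absolutely convergent.

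Next I would apply Cauchy--Schwarz in $L^2(d\mu)$ to this last integral, splitting the integrand as $\big(g(w)\,k_a(w)\big)\cdot\big(\overline{k_a(w)}\,e^{-|w-a|^2/2}\big)$:
\[
\int_{\mathbb C^n}|g(w)|\,e^{-|w-a|^2/2}\,|k_a(w)|^2\,d\mu(w)\;\le\;\|gk_a\|_{L^2(d\mu)}\Big(\int_{\mathbb C^n} e^{-|w-a|^2}\,|k_a(w)|^2\,d\mu(w)\Big)^{1/2}.
\]
Bounding $e^{-|w-a|^2}\le 1$ pointwise, the second factor is at most $\|k_a\|_{L^2(d\mu)}=1$. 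Since $gk_a\in L^2(d\mu)$ by hypothesis, the right-hand side is finite, which retroactively justifies the identity above (the $d\mu$-integral converges absolutely, hence so does the defining integral for $g^{(1/4)}(a)$), and we obtain $|g^{(1/4)}(a)|\le 2^n\|gk_a\|_{L^2(d\mu)}$, i.e.\ \eqref{eq:heat-quarter-Cs}.

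I do not expect any genuine obstacle here; the whole content is the bookkeeping of Gaussian normalizations — the constant $4\pi t$ in \eqref{eq:intro-heat} at $t=\tfrac14$, the factor $(2\pi)^{-n}$ in $d\mu$, and the elementary normalization recorded in \eqref{eq:ka-weight}. The one subtlety worth flagging is that the step producing the $d\mu$-integral should be performed at the level of absolute values (or of measures) first, so as not to manipulate a possibly divergent integral before its convergence is known; carrying the modulus through from the outset, as above, makes this automatic. Alternatively, one can bypass \eqref{eq:ka-weight} and apply Cauchy--Schwarz directly to $\int_{\mathbb C^n}|g|\,e^{-|w-a|^2}\,dv$ in $L^2(dv)$, writing $e^{-|w-a|^2}=e^{-|w-a|^2/2}\cdot e^{-|w-a|^2/2}$ and using $e^{-|w-a|^2}\le e^{-|w-a|^2/2}$ to keep the first Cauchy--Schwarz factor finite; this even yields the slightly stronger constant $2^{n/2}$ in place of $2^n$.
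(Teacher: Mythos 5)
Your proof is correct and follows essentially the same route as the paper: both reduce the bound to a Cauchy--Schwarz estimate after using the Gaussian identity behind \eqref{eq:ka-weight} to compare the $t=\tfrac14$ heat kernel $e^{-|w-a|^2}$ with the weight $e^{-|w-a|^2/2}$ that defines $\|gk_a\|_{L^2(d\mu)}$; the paper carries out the Cauchy--Schwarz in $L^2(dv)$ after first replacing $e^{-|w-a|^2}$ by $e^{-|w-a|^2/2}$, while you work in $L^2(d\mu)$ and absorb the leftover factor $e^{-|w-a|^2}\le1$ afterward, arriving at the same constant $2^n$. Your closing remark is also correct: splitting $e^{-|w-a|^2}$ symmetrically before estimating sharpens the constant to $2^{n/2}$, though the lemma is only used qualitatively so the paper's $2^n$ suffices.
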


\begin{proof}
By the inequality $e^{-|w-a|^2} \le e^{-|w-a|^2/2}$, and Cauchy--Schwarz,
\[
|g^{(1/4)}(a)|
\le \pi^{-n}\Big(\int_{\mathbb C^n} |g(w)|^2 e^{-|w-a|^2/2}\,dv(w)\Big)^{1/2}
\Big(\int_{\mathbb C^n} e^{-|w-a|^2/2}\,dv(w)\Big)^{1/2}.
\]
The second integral equals $(2\pi)^n$. Using \eqref{eq:ka-weight}, the first integral equals $(2\pi)^n\|gk_a\|_2^2$. This gives \eqref{eq:heat-quarter-Cs}.
\end{proof}

\section{A Hilbert-Schmidt estimate and Weyl quantization}

We use the Fourier transform $\widehat f(\xi)=\int_{\mathbb R^{2n}} f(z)e^{-iz\cdot\xi}\,dz$. Write phase space points as $(x,\xi)\in\mathbb R^n\times \mathbb R^n\simeq \mathbb R^{2n}$. For a symbol $a\in L^2(\mathbb R^{2n})$, define the Weyl operator $a(D,X)$ on $L^2(\mathbb R^n)$ by
\begin{equation}\label{eq:weyl}
(a(D,X)f)(x)=(2\pi)^{-n}\int_{\mathbb R^n}\int_{\mathbb R^n}
a \Big(\frac{x+y}{2},\xi\Big)\,e^{i(x-y)\cdot \xi}\,f(y)\,dy\,d\xi.
\end{equation}
This normalization matches the one used in \cite[\S 5]{BC94}.

\begin{lemma}\label{lem:HS}
If $a\in L^2(\mathbb R^{2n})$, then $a(D,X)$ is Hilbert-Schmidt and
\[
\|a(D,X)\|_{HS}=(2\pi)^{-n/2}\,\|a\|_{L^2(\mathbb R^{2n})}.
\]
In particular, $\|a(D,X)\|\le (2\pi)^{-n/2}\|a\|_2$.
\end{lemma}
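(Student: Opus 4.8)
The plan is to compute the Hilbert--Schmidt norm of $a(D,X)$ directly from the integral kernel in \eqref{eq:weyl}. Reading off the kernel, one has
\[
K_a(x,y)=(2\pi)^{-n}\int_{\mathbb R^n} a\Big(\tfrac{x+y}{2},\xi\Big)\,e^{i(x-y)\cdot\xi}\,d\xi,
\]
so that $K_a(x,y)=(2\pi)^{-n/2}\,(\mathcal F_2 a)\big(\tfrac{x+y}{2},y-x\big)$, where $\mathcal F_2$ denotes the unitary Fourier transform in the second ($\xi$) variable only, normalized so that Plancherel holds. (Here I must be careful to reconcile the paper's unnormalized convention $\widehat f(\xi)=\int f(z)e^{-iz\cdot\xi}\,dz$ with the $(2\pi)^{-n}$ in \eqref{eq:weyl}; the bookkeeping of these $2\pi$ factors is the one genuinely error-prone point, so I would carry the partial Fourier transform through explicitly rather than quoting Plancherel in a nonstandard normalization.)

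Next I would pass to the change of variables $u=\tfrac{x+y}{2}$, $v=y-x$ on $\mathbb R^n\times\mathbb R^n$. This map has constant Jacobian $|\partial(u,v)/\partial(x,y)|=1$, so $dx\,dy=du\,dv$. Therefore
\[
\|a(D,X)\|_{HS}^2=\int_{\mathbb R^n}\int_{\mathbb R^n} |K_a(x,y)|^2\,dx\,dy
=(2\pi)^{-n}\int_{\mathbb R^n}\int_{\mathbb R^n} \big|(\mathcal F_2 a)(u,v)\big|^2\,du\,dv,
\]
where $\mathcal F_2$ acts in the variable dual to the one that becomes $v$. Applying Plancherel in that single variable (for a.e.\ fixed $u$) and then integrating in $u$ gives $\int\!\!\int|(\mathcal F_2a)(u,v)|^2\,du\,dv=\|a\|_{L^2(\mathbb R^{2n})}^2$, whence $\|a(D,X)\|_{HS}^2=(2\pi)^{-n}\|a\|_2^2$, which is the claim. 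Since $a\in L^2$ makes $K_a\in L^2(\mathbb R^{2n})$, the operator is genuinely Hilbert--Schmidt (in particular bounded), and the inequality $\|a(D,X)\|\le\|a(D,X)\|_{HS}$ yields the last sentence.

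To be fully rigorous I would first establish the identity for $a$ in a dense convenient class (say Schwartz functions, or finite linear combinations of products $\phi(x)\psi(\xi)$ with $\phi,\psi\in\mathcal S$), where all the integrals converge absolutely and Fubini plus the partial Fourier inversion are unproblematic, obtain the isometry $a\mapsto a(D,X)$ from $(L^2(\mathbb R^{2n}),(2\pi)^{-n/2}\|\cdot\|_2)$ into the Hilbert--Schmidt class on that dense subspace, and then extend by continuity/density to all of $L^2(\mathbb R^{2n})$ (noting that the dense-subspace definition agrees with \eqref{eq:weyl} whenever the latter converges). The only real obstacle is the constant: tracking the $(2\pi)^{-n}$ in \eqref{eq:weyl} together with whatever normalization of Plancherel is forced by the paper's Fourier convention, so that the final constant comes out as exactly $(2\pi)^{-n/2}$ and matches the normalization of \cite[\S5]{BC94}; everything else is the standard Weyl-kernel computation.
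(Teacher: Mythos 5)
Your proof is correct and takes essentially the same route as the paper: read off the Weyl kernel $K_a(x,y)$ from \eqref{eq:weyl}, change variables to $\big(u,v\big)=\big(\tfrac{x+y}{2},\,\pm(x-y)\big)$ (unit Jacobian), apply Plancherel in the second variable for a.e.\ $u$, and track the $(2\pi)$-factors to land on $\|a(D,X)\|_{HS}^2=(2\pi)^{-n}\|a\|_2^2$. Your closing paragraph on first working with a dense class and extending by continuity is a sensible rigor point that the paper leaves implicit, but it does not change the argument.
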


\begin{proof}
The integral kernel of $a(D,X)$ is
\[
K_a(x,y)=(2\pi)^{-n}\int_{\mathbb R^n} a \Big(\frac{x+y}{2},\xi\Big)e^{i(x-y)\cdot \xi}\,d\xi.
\]
Set $u=(x+y)/2$ and $v=x-y$; then $dx\,dy = du\,dv$ and
\[
K_a \Big(u+\frac v2,u-\frac v2\Big)=(2\pi)^{-n}\int_{\mathbb R^n} a(u,\xi)e^{iv\cdot \xi}\,d\xi.
\]
For almost every $u$, Plancherel in $\xi$ gives
\[
\int_{\mathbb R^n}\Big|\int_{\mathbb R^n} a(u,\xi)e^{iv\cdot \xi}\,d\xi\Big|^2\,dv
=(2\pi)^n\int_{\mathbb R^n}|a(u,\xi)|^2\,d\xi.
\]
Therefore
\[
\|a(D,X)\|_{HS}^2=(2\pi)^{-2n}(2\pi)^n\int_{\mathbb R^n}\int_{\mathbb R^n}|a(u,\xi)|^2\,d\xi\,du
=(2\pi)^{-n}\|a\|_2^2. \qedhere
\]
\end{proof}

\section{Blocks with small norm}

Fix $\Phi\in C_c^\infty(\mathbb R^{2n})$ with $\Phi\ge 0$, $\int_{\mathbb R^{2n}}\Phi(u)\,du=1$, and $\operatorname{supp}\Phi\subset\{|u|\le 1/2\}$. For $R\ge 1$ define
\begin{equation}\label{eq:aR}
a_R(z)=\int_{\mathbb R^{2n}} \Phi(u)\,e^{iR\,u\cdot z}\,du,\qquad z\in\mathbb R^{2n}.
\end{equation}
Then $a_R(0)=1$ and $|a_R(z)|\le 1$, so $\|a_R\|_\infty=1$. Since $\Phi$ is smooth and compactly supported, $a_R$ is Schwartz. The scaling identity $a_R(z)=a_1(Rz)$ gives
\begin{equation}\label{eq:aR-L2}
\|a_R\|_{L^2(\mathbb R^{2n})}=R^{-n} \|a_1\|_2.
\end{equation}

Define a Schwartz symbol
\begin{equation}\label{eq:gR}
g_R=e^{-(1/4)\Delta}a_R,
\end{equation}
where $\Delta$ is the Euclidean Laplacian on $\mathbb R^{2n}$. Then by the heat semigroup property,
\begin{equation}\label{eq:gR-quarter}
g_R^{(1/4)}=a_R.
\end{equation}
In particular, $\|g_R^{(1/4)}\|_\infty=1$. Since $g_R$ is bounded, $T_{g_R}$ is a bounded Toeplitz operator with $\|T_{g_R}\|\le \|g_R\|_\infty$.

Let $B:L^2(\mathbb R^n)\to H^2(d\mu)$ denote the Bargmann isometry discussed in \cite[\S 5]{BC94}. Berger and Coburn cite results of Folland showing: if $a(D,X)$ maps Schwartz space $\mathcal S(\mathbb R^n)$ to itself, then the Berezin transform of $B\,a(D,X)\,B^{-1}$ equals $a^{(1/4)}$, where $a^{(t)}$ is the heat evolution on $\mathbb R^{2n}$; see \cite[p. 580]{BC94}.
    They also record that the Berezin transform is injective on bounded operators; see Proposition 3(8) in \cite[\S 3]{BC94}.

\begin{proposition}\label{prop:block}
There exists $C_0=C_0(n,\Phi)>0$ such that for all $R\ge 1$,
\[
\|T_{g_R}\|\le C_0 R^{-n}
\qquad\text{and}\qquad
\|g_R^{(1/4)}\|_\infty=1.
\]
\end{proposition}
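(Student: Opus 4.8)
The plan is to combine the Hilbert--Schmidt bound of Lemma~\ref{lem:HS} with the scaling identity \eqref{eq:aR-L2} and the Bargmann correspondence recalled just before the statement. First I would pass to the Weyl side: the symbol $a_R\in\mathcal S(\mathbb R^{2n})$ lies in $L^2(\mathbb R^{2n})$, so Lemma~\ref{lem:HS} gives $\|a_R(D,X)\|\le(2\pi)^{-n/2}\|a_R\|_2=(2\pi)^{-n/2}R^{-n}\|a_1\|_2$. Since $a_R$ is Schwartz, $a_R(D,X)$ maps $\mathcal S(\mathbb R^n)$ to itself, so the cited results of Folland apply: the operator $B\,a_R(D,X)\,B^{-1}$ on $H^2(d\mu)$ has Berezin transform equal to the heat evolution $a_R^{(1/4)}$, which by \eqref{eq:gR-quarter} is exactly $g_R^{(1/4)}$. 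On the other hand, the bounded Toeplitz operator $T_{g_R}$ (bounded because $g_R$ is a bounded symbol) has Berezin transform $\widetilde{T_{g_R}}=g_R^{(1/2)}=(g_R^{(1/4)})^{(1/4)}=a_R^{(1/4)}$ as well, using \eqref{eq:berezin-toeplitz} and the semigroup property. By injectivity of the Berezin transform on bounded operators (Proposition~3(8) of \cite{BC94}), we conclude $T_{g_R}=B\,a_R(D,X)\,B^{-1}$. Since $B$ is an isometry, $\|T_{g_R}\|=\|a_R(D,X)\|\le(2\pi)^{-n/2}R^{-n}\|a_1\|_2$, so the first assertion holds with $C_0=(2\pi)^{-n/2}\|a_1\|_2$, a constant depending only on $n$ and $\Phi$.

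The second assertion is immediate: by \eqref{eq:gR-quarter} we have $g_R^{(1/4)}=a_R$, and the construction of $a_R$ in \eqref{eq:aR} gives $a_R(0)=\int\Phi=1$ while $|a_R(z)|\le\int\Phi=1$ for all $z$, so $\|g_R^{(1/4)}\|_\infty=\|a_R\|_\infty=1$.

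The main subtlety is not the norm estimate but the identification $T_{g_R}=B\,a_R(D,X)\,B^{-1}$, i.e.\ making sure the two operators really have the same Berezin transform so that injectivity can be invoked. This requires that $g_R$ is a bounded measurable symbol (so that $T_{g_R}$ is genuinely bounded and its Berezin transform is $g_R^{(1/2)}$ via \eqref{eq:berezin-toeplitz}), which follows because $g_R=e^{-(1/4)\Delta}a_R$ is Schwartz, hence bounded; and it requires the heat-semigroup bookkeeping $g_R^{(1/2)}=(g_R^{(1/4)})^{(1/4)}=(a_R)^{(1/4)}=a_R^{(1/4)}$ to match Folland's formula for the Berezin transform of the Weyl operator. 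Once the two Berezin transforms agree, injectivity on bounded operators closes the argument. I do not expect any genuine obstacle here: every ingredient is quoted from \cite{BC94} or proved above, and the computation is a short chain of semigroup identities.
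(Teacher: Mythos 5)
Your proof is correct and follows the paper's argument in all essentials: the Hilbert--Schmidt bound of Lemma~\ref{lem:HS} combined with the scaling \eqref{eq:aR-L2} gives $\|a_R(D,X)\|\le (2\pi)^{-n/2}\|a_1\|_2\,R^{-n}$, and the Bargmann/Berezin identification via Folland's formula together with injectivity of the Berezin transform yields $T_{g_R}=B\,a_R(D,X)\,B^{-1}$, exactly as in the paper. (One notational slip: $a_R^{(1/4)}$ is $g_R^{(1/2)}$, not $g_R^{(1/4)}$, but your subsequent chain $\widetilde{T_{g_R}}=g_R^{(1/2)}=(g_R^{(1/4)})^{(1/4)}=a_R^{(1/4)}$ is correct and matches the Weyl side, so the argument goes through.)
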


\begin{proof}

Let $A_R=a_R(D,X)$ be the Weyl operator on $L^2(\mathbb R^n)$ with symbol $a_R$, defined by \eqref{eq:weyl}. Since $a_R\in L^2(\mathbb R^{2n})$, Lemma \ref{lem:HS} applies and gives
\begin{equation}\label{eq:AR-HS}
\|A_R\|\le \|A_R\|_{HS}=(2\pi)^{-n/2} \|a_R\|_{L^2(\mathbb R^{2n})} = (2\pi)^{-n/2} R^{-n}\|a_1\|_2
\end{equation}

Let $B:L^2(\mathbb R^n)\to H^2(d\mu)$ be the Bargmann isometry, and set $X_R = B A_R B^{-1}$. Then $X_R$ is bounded and $\|X_R\|=\|A_R\|$.

Because $a_R\in\mathcal S(\mathbb R^{2n})$, the corresponding Weyl operator $A_R$ maps $\mathcal S(\mathbb R^n)$ into itself. The result \cite[Proposition 2.97]{Fo89} cited by Berger--Coburn (from \cite[p. 580]{BC94}) implies that the Berezin transform of $X_R=B A_R B^{-1}$ is
\begin{equation}\label{eq:berezin-XR}
\widetilde{X_R} = a_R^{(1/4)}.
\end{equation}

On the Toeplitz side, \eqref{eq:berezin-toeplitz} gives
\begin{equation}\label{eq:berezin-TgR}
\widetilde{T_{g_R}} = g_R^{(1/2)}.
\end{equation}
Since $g_R$ is Schwartz, all heat integrals are absolutely convergent and the semigroup property $g^{(s+t)}=(g^{(s)})^{(t)}$ holds by Fubini. Using $g_R^{(1/4)}=a_R$, 
\[
g_R^{(1/2)}=(g_R^{(1/4)})^{(1/4)}=a_R^{(1/4)}.
\]
Combining with \eqref{eq:berezin-XR}-\eqref{eq:berezin-TgR} yields
\begin{equation}\label{eq:berezin-equality}
\widetilde{T_{g_R}}=\widetilde{X_R}.
\end{equation}

We use injectivity of the Berezin transform to identify operators. Set $Y_R=T_{g_R}-X_R\in \mathcal B(H^2(d\mu))$. By \eqref{eq:berezin-equality}, $\widetilde{Y_R}\equiv 0$, hence also $\widetilde{Y_R^*}\equiv 0$ since $\widetilde{Y^*}(w)=\overline{\widetilde{Y}(w)}$. We now use \cite[\S 3, Proposition 3(8)]{BC94}. For each bounded operator $Y$, Berger--Coburn define an integral kernel $K_Y(w,z)$ (via $Y^*$), and show that
\[
K_Y(w,w)=\widetilde{(Y^*)}(w)\,K(w,w),
\qquad K(w,w)\neq 0,
\]
so $\widetilde{Y^*}\equiv 0$ implies $K_Y(w,w)\equiv 0$. Proposition 3(8) then gives
$Y=0$. Applying this with $Y=Y_R$ yields $Y_R=0$, hence $T_{g_R}=X_R=BA_RB^{-1}$.

From unitarity of $B$, $\|T_{g_R}\|=\|X_R\|=\|A_R\|$. 
By \eqref{eq:AR-HS},
\[
\|T_{g_R}\|\le (2\pi)^{-n/2}\|a_1\|_2\,R^{-n}. \qedhere
\]
\end{proof}

\section{A summation lemma}

For $a\in\mathbb C^n$ define
\[
\|g\|_{2,a}=\|gk_a\|_{L^2(d\mu)}.
\]

\begin{definition}\label{def:star}
A sequence $(g_m)_{m\ge 1}$ of measurable functions on $\mathbb C^n$ satisfies $(\star)$ if
\begin{equation}\label{eq:star}
\forall a\in\mathbb C^n, \qquad \sum_{m=1}^\infty \|g_m\|_{2,a}<\infty.
\end{equation}
\end{definition}

\begin{lemma}\label{lem:sum}
Assume $(g_m)$ satisfies $(\star)$ and that each $T_{g_m}$ is bounded with $\sum_m \|T_{g_m}\|<\infty$. Let $X=\sum_m T_{g_m}$ (operator-norm convergence). Then there exists a measurable $g$ such that:
\begin{enumerate}
\item $gk_a\in L^2(d\mu)$ for every $a$ and, for each $a$, the series $\sum_m g_mk_a$ converges in $L^2(d\mu)$ to $gk_a$;
\item $T_g$ is bounded and $T_g=X$;
\item for every $a\in\mathbb C^n$, the series $\sum_m g_m^{(1/4)}(a)$ converges absolutely and
\begin{equation}\label{eq:sum-heat}
g^{(1/4)}(a)=\sum_{m=1}^\infty g_m^{(1/4)}(a).
\end{equation}
\end{enumerate}
\end{lemma}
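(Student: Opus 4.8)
The plan is to define $g$ directly as the pointwise sum $\sum_m g_m$, after first checking that this sum makes sense a.e., and then verify the three claims using the hypothesis $(\star)$ together with summability of the operator norms. Here is the outline.

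\medskip
\noindent\emph{Step 1: Pointwise definition of $g$.} Fix $a$ and apply Lemma \ref{lem:heat-quarter-bound}: each $\|g_m k_a\|_{L^2(d\mu)}<\infty$, and condition $(\star)$ says these are summable. Writing out $\|g_m k_a\|_{L^2(d\mu)}^2 = (2\pi)^{-n}\int |g_m(w)|^2 e^{-|w-a|^2/2}\,dv(w)$ via \eqref{eq:ka-weight}, the hypothesis $(\star)$ gives $\sum_m \|g_m k_a\|_{2,a}<\infty$ for every $a$, so on each ball the weight $e^{-|w-a|^2/2}$ is bounded below and $\sum_m \|g_m\|_{L^2(\text{ball})}<\infty$. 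Hence $\sum_m g_m$ converges in $L^2$ of every ball, in particular a.e. along a subsequence and unconditionally in $L^2_{\mathrm{loc}}$; define $g(w)=\sum_m g_m(w)$ for a.e.\ $w$. Since the partial sums $\sum_{m\le M} g_m k_a$ are Cauchy in $L^2(d\mu)$ (their increments have summable norms), they converge in $L^2(d\mu)$ to a limit which agrees a.e.\ with $g k_a$; this proves (1).

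\medskip
\noindent\emph{Step 2: Identifying $T_g$ with $X$.} For kernel vectors $k_a,k_b$ we have $(T_{g_m}k_a,k_b)=(g_m k_a,k_b)$, and summing over $m$ — justified by $L^2(d\mu)$-convergence of $\sum_m g_m k_a$ from Step 1 — gives $\sum_m (T_{g_m}k_a,k_b)=(gk_a,k_b)=(T_g k_a,k_b)$. On the other hand $X=\sum_m T_{g_m}$ in operator norm, so $(Xk_a,k_b)=\sum_m (T_{g_m}k_a,k_b)$. Thus $T_g$ and $X$ have the same matrix coefficients on the kernel vectors; since these span a dense subspace of $H^2(d\mu)$ and $X$ is bounded, $T_g$ extends to the bounded operator $X$. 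This is (2). (One should note, as the paper already does in the introduction, that ``$T_g$'' here means this bounded extension.)

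\medskip
\noindent\emph{Step 3: The heat transform of the sum.} Fix $a$. By Lemma \ref{lem:heat-quarter-bound} applied to each $g_m$, $|g_m^{(1/4)}(a)|\le 2^n\|g_m\|_{2,a}$, so $(\star)$ makes $\sum_m g_m^{(1/4)}(a)$ absolutely convergent. To get \eqref{eq:sum-heat} one must interchange the sum over $m$ with the integral defining $g^{(1/4)}(a)=(\pi)^{-n}\int g(w)e^{-|w-a|^2}\,dv(w)$. This is legitimate by dominated convergence / Fubini--Tonelli once we know $\sum_m \int |g_m(w)|\,e^{-|w-a|^2}\,dv(w)<\infty$; and by Cauchy--Schwarz exactly as in the proof of Lemma \ref{lem:heat-quarter-bound} (using $e^{-|w-a|^2}\le e^{-|w-a|^2/2}$), this integral is bounded by $2^n(2\pi)^{n/2}\|g_m\|_{2,a}$ up to constants, which is summable by $(\star)$. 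Hence $g^{(1/4)}(a)=\sum_m g_m^{(1/4)}(a)$, proving (3).

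\medskip
\noindent\emph{Main obstacle.} The only genuinely delicate point is the passage in Step 1 from summability of the \emph{norms} $\|g_m\|_{2,a}$ to an a.e.\ pointwise sum defining a single measurable $g$ — one needs $(\star)$ to hold for all $a$ (equivalently, local $L^2$ summability on every ball) to rule out the partial sums drifting off on a null set, and to guarantee that the $L^2(d\mu)$-limit of $\sum_m g_m k_a$ really is of the form (measurable $g$)$\times k_a$ with the \emph{same} $g$ for every $a$. This is why $(\star)$ is stated as a condition quantified over all $a\in\mathbb C^n$ rather than at a single point. Once $g$ is pinned down, Steps 2 and 3 are routine interchanges of limits justified by the uniform-in-$m$ Cauchy--Schwarz bound of Lemma \ref{lem:heat-quarter-bound}.
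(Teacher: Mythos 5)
Your proof is correct and follows the same three-step structure as the paper's: construct $g$ from $(\star)$, identify $T_g=X$ by matching matrix coefficients on the dense span of kernel vectors, and pass to the heat transform via the Cauchy--Schwarz bound of Lemma~\ref{lem:heat-quarter-bound}. Two minor technical variants worth noting: in Step~1 the paper avoids local $L^2$ on balls by observing that $k_0\equiv 1$, so $(\star)$ at $a=0$ already reads $\sum_m\|g_m\|_{L^2(d\mu)}<\infty$ and yields a global $L^2(d\mu)$ limit $g$ at once; and in Step~3 the paper avoids Fubini--Tonelli altogether by applying Lemma~\ref{lem:heat-quarter-bound} to $S_N-g$ and using $\|(S_N-g)k_a\|_2\to 0$ from part~(1) to get $S_N^{(1/4)}(a)\to g^{(1/4)}(a)$, which is a slightly more economical reuse of what was already proved.
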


\begin{proof}
Fix $a\in\mathbb C^n$. By $(\star)$ the series $\sum_{m\ge1} g_m k_a$ is absolutely
convergent in $L^2(d\mu)$, hence the partial sums $S_N k_a$ converge in $L^2(d\mu)$
to some $F_a\in L^2(d\mu)$.
    Separately, since $k_0\equiv 1$, the condition $(\star)$ with $a=0$ implies
$\sum_m \|g_m\|_{L^2(d\mu)}<\infty$, so $S_N\to g$ in $L^2(d\mu)$ for some
$g\in L^2(d\mu)$. Passing to a subsequence, we may assume $S_{N_j}(w)\to g(w)$
for $d\mu$-almost every $w$. Multiplying pointwise by $k_a(w)$ gives
$S_{N_j}(w)k_a(w)\to g(w)k_a(w)$ for $d\mu$-almost every $w$.
    Since $S_{N_j}k_a\to F_a$ in $L^2(d\mu)$ and also $S_{N_j}k_a\to gk_a$ a.e.,
it follows that $F_a=gk_a$ a.e. In particular $gk_a\in L^2(d\mu)$ and
$S_N k_a\to gk_a$ in $L^2(d\mu)$. This gives (1).

For (2), let $a,b\in\mathbb C^n$. Since $\|k_b\|_2=1$,
\[
|(g_mk_a,k_b)|\le \|g_mk_a\|_2=\|g_m\|_{2,a}.
\]
By \eqref{eq:star}, $\sum_m (g_mk_a,k_b)$ converges absolutely, and
\[
\sum_m (g_mk_a,k_b)=\Big(\sum_m g_mk_a,\,k_b\Big)=(gk_a,k_b).
\]
For each $m$ and each $a,b$, the identity $(T_{g_m}k_a,k_b)=(g_mk_a,k_b)$ holds because $k_a\in \operatorname{Dom}(M_{g_m})$ and $P$ is the orthogonal projection onto a space containing $k_b$. Therefore
\[
(Xk_a,k_b)=\sum_m (T_{g_m}k_a,k_b)=\sum_m (g_mk_a,k_b)=(gk_a,k_b).
\]
The span of $\{k_a:a\in\mathbb C^n\}$ is dense in $H^2(d\mu)$, so the bounded operator $X$ is determined by its matrix coefficients on this span. $X$ coincides with $T_g$ on $\operatorname{span}\{k_a:a\in\mathbb C^n\}$, hence $X$ is the bounded extension of $T_g$. This gives (2).

For (3), fix $a\in\mathbb C^n$ and write $S_N=\sum_{m=1}^N g_m$. Since $gk_a\in L^2(d\mu)$ by (1), Lemma \ref{lem:heat-quarter-bound} implies that $g^{(1/4)}(a)$ exists as an absolutely convergent integral.

Moreover, for each $m$ Lemma \ref{lem:heat-quarter-bound} gives
\[
|g_m^{(1/4)}(a)|\le 2^n\|g_mk_a\|_2=2^n\|g_m\|_{2,a}.
\]
By $(\star)$ the right-hand side is summable in $m$, hence $\sum_{m\ge1} g_m^{(1/4)}(a)$ converges absolutely.

Next, for each $N$ the function $S_N$ satisfies $S_Nk_a\in L^2(d\mu)$ (as a finite sum), so $S_N^{(1/4)}(a)$ exists absolutely. Since the defining integrals are absolutely convergent, we may split them, and therefore
\[
S_N^{(1/4)}(a)=\pi^{-n}\int_{\mathbb C^n} S_N(w)e^{-|w-a|^2}\,dv(w)
=\sum_{m=1}^N g_m^{(1/4)}(a).
\]

Finally, by (1) we have $(S_N-g)k_a\to 0$ in $L^2(d\mu)$. Applying Lemma \ref{lem:heat-quarter-bound} to $S_N-g$ yields
\[
|S_N^{(1/4)}(a)-g^{(1/4)}(a)|
=|(S_N-g)^{(1/4)}(a)|
\le 2^n\|(S_N-g)k_a\|_2 \longrightarrow 0.
\]
Therefore $S_N^{(1/4)}(a)\to g^{(1/4)}(a)$, and since $S_N^{(1/4)}(a)=\sum_{m=1}^N g_m^{(1/4)}(a)$ for all $N$, we conclude that
\[
g^{(1/4)}(a)=\sum_{m=1}^\infty g_m^{(1/4)}(a),
\]
which is \eqref{eq:sum-heat}.
\end{proof}

\section{A counterexample at $t=1/4$}

We define the blocks. Let $(g_R)$ be the family from Proposition \ref{prop:block}. For integers $m\ge 1$ set
\begin{equation}\label{eq:params}
R_m=m^3,\qquad c_m=m,\qquad a_m=(e^{m^7},0,\dots,0)\in\mathbb C^n.
\end{equation}
Define
\begin{equation}\label{eq:gm}
g_m(z)=c_m\,g_{R_m}(z-a_m).
\end{equation}
Define $g$ as the $L^2(d\mu)$-sum
\begin{equation}\label{eq:g-sum}
g=\sum_{m=1}^\infty g_m,
\end{equation}
which is justified in Lemma \ref{lem:star-for-gm} below.

\begin{lemma}[Summability of operator norms]\label{lem:operator-sum}
The operator series $\sum_{m\ge 1} T_{g_m}$ converges absolutely in
$\mathcal B(H^2(d\mu))$. Equivalently, $\sum_{m\ge 1}\|T_{g_m}\|<\infty$.
\end{lemma}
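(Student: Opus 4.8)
The plan is to use the translation invariance of the Toeplitz norm on the Bargmann--Fock space to reduce everything to Proposition \ref{prop:block}, after which the estimate becomes a convergent $p$-series. The large separation parameters $a_m$ play no role in this lemma; they matter only later, when the separated heat-transform peaks are assembled.

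First I would record the weighted translation operators. For $b\in\mathbb C^n$ set $(V_bf)(z)=k_b(z)\,f(z-b)$. Using \eqref{eq:ka-weight} and a change of variables one checks that $V_b$ is a unitary operator on $L^2(d\mu)$, with inverse $V_{-b}$ (the cross terms in the translated Gaussian cancel against $|k_b|^2$), and that $V_b$ maps $H^2(d\mu)$ onto itself, so that $V_bPV_b^{-1}=P$. A direct computation using the identity $k_b(z)\,k_{-b}(z-b)\equiv1$ gives $V_bM_hV_b^{-1}=M_{h(\cdot-b)}$ for measurable $h$ on the natural domain, and hence
\[
V_bT_hV_b^{-1}=T_{h(\cdot-b)}.
\]
In particular, for $h\in L^\infty(\mathbb C^n)$ the operator $T_{h(\cdot-b)}$ is bounded with $\|T_{h(\cdot-b)}\|=\|T_h\|$, independently of $b$.

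Next I would apply this to the blocks. Each $g_{R_m}$ is Schwartz, hence bounded, so taking $h=c_mg_{R_m}$ and $b=a_m$, and using linearity of the symbol-to-operator map together with $g_m=c_m\,g_{R_m}(\cdot-a_m)$,
\[
\|T_{g_m}\|=c_m\,\|T_{g_{R_m}}\|\le C_0\,c_m\,R_m^{-n},
\]
the last inequality by Proposition \ref{prop:block}. Substituting the parameters \eqref{eq:params}, namely $c_m=m$ and $R_m=m^3$, yields $\|T_{g_m}\|\le C_0\,m^{1-3n}$. Since $n\ge1$ forces $3n-1\ge2$, we obtain $\sum_{m\ge1}\|T_{g_m}\|\le C_0\sum_{m\ge1}m^{-2}<\infty$, and completeness of $\mathcal B(H^2(d\mu))$ turns absolute summability of the norms into operator-norm convergence of $\sum_mT_{g_m}$.

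The only step with any real content is the first one: the unitarity of $V_b$ and the intertwining identity $V_bT_hV_b^{-1}=T_{h(\cdot-b)}$. This is a standard feature of the Fock space (the $V_b$ are the Weyl/displacement operators up to a unimodular factor), and the verification is the short change-of-variables computation indicated above, so I do not anticipate a genuine obstacle. Everything else is bookkeeping with the explicit exponents in \eqref{eq:params}; it is worth emphasizing that the bound on $\|T_{g_m}\|$ is completely insensitive to the locations $a_m$, which is exactly what permits choosing the $a_m$ freely—and enormously separated—when the unbounded heat transform is constructed in the next section.
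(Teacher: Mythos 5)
Your proof is correct and follows essentially the same route as the paper: both reduce to $\|T_{g_m}\| = c_m\|T_{g_{R_m}}\| \le C_0\,c_m R_m^{-n} = C_0\,m^{1-3n}$ via translation invariance of the Toeplitz norm and then sum a $p$-series. The only difference is that the paper invokes the identity $W_aT_gW_a^*=T_{g(\cdot-a)}$ directly from \cite[\S 4]{BC94}, while you rederive it from scratch by constructing the weighted translation (displacement) operators $V_b$ and checking unitarity and the intertwining property.
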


\begin{proof}
Recall $g_m(z)=c_m\,g_{R_m}(z-a_m)$. By the identity $W_aT_gW_a^*=T_{g(\cdot-a)}$ \cite[\S 4]{BC94} we have $\|T_{g(\cdot-a)}\|=\|T_g\|$. By linearity in the symbol,
\[
\|T_{g_m}\|
=\|T_{c_m g_{R_m}(\cdot-a_m)}\|
=|c_m|\,\|T_{g_{R_m}(\cdot-a_m)}\|
=|c_m|\,\|T_{g_{R_m}}\|.
\]
Proposition \ref{prop:block} gives $\|T_{g_{R_m}}\|\le C_0 R_m^{-n}$, hence
\[
\|T_{g_m}\|\le C_0\,|c_m|\,R_m^{-n} = C_0 \, m \, m^{-3n}
\]
For $n\ge 1$, the series $\sum_m \|T_{g_m}\|$ converges.
\end{proof}

The next lemma verifies the $(\star)$ condition needed to apply Lemma \ref{lem:sum}.

\begin{lemma}\label{lem:star-for-gm}
The sequence $(g_m)$ defined by \eqref{eq:gm} satisfies $(\star)$.
\end{lemma}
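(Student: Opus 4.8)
The plan is to estimate $\|g_m\|_{2,a}$ directly from the definition and show it decays super‑exponentially in $m$, so that the series in \eqref{eq:star} converges for every fixed $a\in\mathbb C^n$. Writing out the norm via \eqref{eq:ka-weight}, substituting $w=v+a_m$, and using \eqref{eq:gm},
\[
\|g_m\|_{2,a}^2=(2\pi)^{-n}\,c_m^2\int_{\mathbb R^{2n}}\big|g_{R_m}(v)\big|^2\,e^{-|v+a_m-a|^2/2}\,dv .
\]
The point is that $g_{R_m}$ is concentrated near the origin while the weight $e^{-|v+a_m-a|^2/2}$ lives near $a-a_m$, a point of size $\approx e^{m^7}$ by \eqref{eq:params}; since $g_{R_m}$ decays rapidly away from $0$, the overlap is minute. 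Making this quantitative requires decay bounds on $g_{R_m}$ with \emph{explicit} dependence on $R_m=m^3$.

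Those bounds come from the frequency side. From \eqref{eq:aR} and the convention $\widehat f(\xi)=\int f(z)e^{-iz\cdot\xi}\,dz$ one gets $\widehat{a_R}(\xi)=(2\pi)^{2n}R^{-2n}\Phi(\xi/R)$, hence by \eqref{eq:gR},
\[
\widehat{g_R}(\xi)=e^{|\xi|^2/4}\,\widehat{a_R}(\xi)=(2\pi)^{2n}R^{-2n}e^{|\xi|^2/4}\,\Phi(\xi/R),
\]
which is supported in $\{|\xi|\le R/2\}$. Two estimates follow by Fourier inversion over this compact support. First, bounding $e^{|\xi|^2/4}\le e^{R^2/16}$ there gives $\|g_R\|_\infty\le C_0\,e^{R^2/16}$. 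Second, for every integer $N\ge 1$,
\[
|g_R(x)|\le C_N\,e^{R^2/16}\,(R/|x|)^N,\qquad x\ne 0,\ R\ge 1,
\]
with $C_N=C_N(n,\Phi)$; one proves this by rescaling $\xi=R\eta$ and integrating by parts $N$ times in $\eta$ against $e^{iRx\cdot\eta}$, each step contributing a factor $(R|x|)^{-1}$ and at most a factor $O(R^2)$ when the derivative hits $e^{R^2|\eta|^2/4}\Phi(\eta)$ on the support $|\eta|\le 1/2$.

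Now fix $a$ and pick $m_0=m_0(a)$ large enough that $|a_m-a|\ge\tfrac{1}{2}e^{m^7}$ and $\rho_m:=\tfrac{1}{2}|a_m-a|\ge R_m$ for all $m\ge m_0$; note $\rho_m\ge\tfrac{1}{4}e^{m^7}$. Split the $v$‑integral at $|v|=\rho_m$. On $\{|v|\le\rho_m\}$ one has $|v+a_m-a|\ge|a_m-a|-|v|\ge\rho_m$, so $e^{-|v+a_m-a|^2/2}\le e^{-\rho_m^2/2}$; using $\|g_{R_m}\|_\infty^2\le C_0^2 e^{R_m^2/8}$ and $\operatorname{vol}\{|v|\le\rho_m\}\lesssim\rho_m^{2n}$, this part is $\lesssim e^{R_m^2/8}\rho_m^{2n}e^{-\rho_m^2/2}$. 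On $\{|v|>\rho_m\}$ bound the weight by $1$ and apply the decay bound with a fixed $N>n$: since $\int_{|v|>\rho_m}|v|^{-2N}\,dv\lesssim_{n,N}\rho_m^{2n-2N}$, this part is $\lesssim_N e^{R_m^2/8}R_m^{2N}\rho_m^{2n-2N}$. Inserting $R_m=m^3$ and $\rho_m\ge\tfrac{1}{4}e^{m^7}$, both pieces are $O_N\big(m^{6N}\exp(\tfrac{1}{8}m^6-2(N-n)m^7)\big)$, the near‑field piece being in fact doubly‑exponentially smaller. Hence, for $m\ge m_0$,
\[
\|g_m\|_{2,a}\le C_N\,m^{3N+1}\,\exp\!\big(\tfrac{1}{16}m^6-(N-n)m^7\big),
\]
which for $N=n+1$ decays faster than any geometric rate, so $\sum_{m\ge m_0}\|g_m\|_{2,a}<\infty$. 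Each of the finitely many remaining terms is finite since $g_m$ is bounded and $\|g_m\|_{2,a}\le\|g_m\|_\infty\|k_a\|_{L^2(d\mu)}=\|g_m\|_\infty$. Thus $\sum_{m\ge 1}\|g_m\|_{2,a}<\infty$ for every $a$, which is $(\star)$.

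The main obstacle is the second step: one needs a genuinely quantitative Schwartz‑type decay estimate for $g_R=e^{-\Delta/4}a_R$, and $e^{-\Delta/4}$ is a \emph{backward} heat propagator, meaningful only because $\widehat{a_R}$ has compact support. A soft ``$g_R$ is Schwartz'' statement is therefore useless; the $R$‑dependence of the relevant constants must be tracked, which is exactly what the explicit formula for $\widehat{g_R}$ together with integration by parts delivers. Given that, the remainder is bookkeeping: the super‑exponential separation $|a_m|=e^{m^7}$ swamps the $e^{O(m^6)}$ growth produced by $R_m=m^3$, with room to spare.
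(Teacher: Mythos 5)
Your argument is correct and takes essentially the same route as the paper's: the same splitting of the Gaussian-weighted $L^2$ integral at $|v|\sim\tfrac{1}{2}|a_m-a|$, and the same key input --- an $R$-explicit decay estimate for $g_R$ extracted from the compactly supported, $e^{|\xi|^2/4}$-amplified Fourier transform $\widehat{g_R}$. The only difference is that you obtain pointwise bounds $|g_R(x)|\lesssim_N e^{R^2/16}(R/|x|)^N$ by non\-stationary-phase integration by parts, whereas the paper uses $L^2$ moment bounds $\int|u|^{2N}|g_R|^2\lesssim_N e^{R^2/8}R^{2N-2n}$ (via Plancherel) combined with a Chebyshev tail estimate; both are equally effective given the super-exponential separation $|a_m|=e^{m^7}$.
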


\begin{proof}
Fix $a\in\mathbb C^n$ and set $v_m=a_m-a$. Using \eqref{eq:ka-weight} and the change of variables $u=w-a_m$,
\begin{equation}\label{eq:norm2a}
\|g_m\|_{2,a}^2
=(2\pi)^{-n}|c_m|^2\int_{\mathbb R^{2n}} |g_{R_m}(u)|^2\,e^{-|u+v_m|^2/2}\,du.
\end{equation}
Split $\mathbb R^{2n}=D_{m,1}\cup D_{m,2}$ with
\[
D_{m,1}=\{|u|\le |v_m|/2\},\qquad D_{m,2}=\{|u|>|v_m|/2\}.
\]
On $D_{m,1}$, $|u+v_m|\ge |v_m|/2$, so
\begin{equation}\label{eq:D1}
\int_{D_{m,1}} |g_{R_m}(u)|^2\,e^{-|u+v_m|^2/2}\,du
\le e^{-|v_m|^2/8}\,\|g_{R_m}\|_{L^2(\mathbb R^{2n})}^2.
\end{equation}
Since $\widehat g_R(\xi)=e^{|\xi|^2/4}\widehat a_R(\xi)$ and $\widehat a_R$ is supported in $\{|\xi|\le R/2\}$, one has $\|g_R\|_2\le e^{R^2/16}\|a_R\|_2$, and \eqref{eq:aR-L2} gives
\begin{equation}\label{eq:gR-L2}
\|g_R\|_2 \le C\,e^{R^2/16}R^{-n}
\end{equation}
for a constant $C$ depending only on $\Phi$ and $n$.

On $D_{m,2}$, drop the Gaussian weight to get
\[
\int_{D_{m,2}} |g_{R_m}(u)|^2\,e^{-|u+v_m|^2/2}\,du
\le \int_{|u|>|v_m|/2} |g_{R_m}(u)|^2\,du.
\]
For an integer $N\ge 1$, whenever $|u|^N f(u)\in L^2(\mathbb R^{2n})$ we have
\begin{equation}\label{eq:tail-moment}
\int_{|u|>\rho} |f(u)|^2\,du \le \rho^{-2N}\int_{\mathbb R^{2n}} |u|^{2N}|f(u)|^2\,du.
\end{equation} Apply \eqref{eq:tail-moment} with $f=g_{R_m}$ and $\rho=|v_m|/2$.

By Plancherel, $\int |u|^{2N}|g_{R}|^2$ is controlled by $L^2$ norms of derivatives of $\widehat g_R$. Using the explicit formula $\widehat a_R(\xi)=(2\pi)^{2n}R^{-2n}\Phi(\xi/R)$ and that $|\xi|\le R/2$ on the support, one obtains
\begin{equation}\label{eq:moments}
\int_{\mathbb R^{2n}} |u|^{2N}|g_R(u)|^2\,du \le C_N\,e^{R^2/8}\,R^{2N-2n}
\end{equation}
for constants $C_N$ depending on $N,n,\Phi$. Combining \eqref{eq:tail-moment} and \eqref{eq:moments} gives
\begin{equation}\label{eq:tail}
\int_{|u|>|v_m|/2} |g_{R_m}(u)|^2\,du
\le C_N\,e^{R_m^2/8}\,R_m^{2N-2n}\,|v_m|^{-2N}.
\end{equation}

Insert \eqref{eq:D1}, \eqref{eq:gR-L2}, and \eqref{eq:tail} into \eqref{eq:norm2a}. Since $|v_m|=|a_m-a|\ge e^{m^7}-|a|$ and $R_m=m^3$, there is $m_a$ such that for all $m\ge m_a$,
\[
\|g_m\|_{2,a}^2 \le C'_{N,a}\,m^2\,e^{m^6/8}\,m^{6N-6n}\,e^{-2Nm^7}.
\]
The right-hand side is summable in $m$ for each fixed $a$ and fixed $N\ge 1$. This proves $\sum_m \|g_m\|_{2,a}<\infty$.
\end{proof}

\subsection{Unboundedness of $g^{(1/4)}$}

\begin{lemma}\label{lem:peaks}
For $g_m$ as in \eqref{eq:gm},
\[
g_m^{(1/4)}(a_m)=m.
\]
\end{lemma}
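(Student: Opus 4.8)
The proof is a short computation resting on two structural properties of the heat transform \eqref{eq:intro-heat} at $t=\tfrac14$: linearity in the symbol and translation-equivariance. The plan is to combine these with the identity $g_R^{(1/4)}=a_R$ from \eqref{eq:gR-quarter} and the normalization $a_R(0)=1$. Since $g_{R_m}$ is Schwartz, so is $g_m(z)=c_m g_{R_m}(z-a_m)$ by \eqref{eq:gm}, and therefore every heat integral occurring below is absolutely convergent; this justifies pulling the scalar $c_m$ out of the integral and performing a translation substitution, exactly as in the proof of Proposition~\ref{prop:block}.

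First I would record translation-equivariance: for $h\in\mathcal{S}(\mathbb{R}^{2n})$ and $b\in\mathbb{C}^n$, substituting $w\mapsto w+b$ in
\[
\bigl(h(\cdot-b)\bigr)^{(1/4)}(z)=\pi^{-n}\int_{\mathbb{C}^n} h(w-b)\,e^{-|w-z|^2}\,dv(w)
\]
and using translation-invariance of Lebesgue measure gives $\bigl(h(\cdot-b)\bigr)^{(1/4)}(z)=h^{(1/4)}(z-b)$. Together with the elementary identity $(c\,h)^{(1/4)}=c\,h^{(1/4)}$, applied with $h=g_{R_m}$, $b=a_m$, and $c=c_m$, this yields
\[
g_m^{(1/4)}(z)=c_m\,g_{R_m}^{(1/4)}(z-a_m).
\]
Evaluating at $z=a_m$ and invoking \eqref{eq:gR-quarter} gives $g_m^{(1/4)}(a_m)=c_m\,a_{R_m}(0)$. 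Finally $a_{R_m}(0)=\int_{\mathbb{R}^{2n}}\Phi(u)\,du=1$ from the definition \eqref{eq:aR} of $a_R$, so $g_m^{(1/4)}(a_m)=c_m=m$ by \eqref{eq:params}.

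There is no real obstacle here: the content of the statement is simply that the block $g_m$ was \emph{designed} so that its $t=\tfrac14$ heat profile is $c_m\,a_{R_m}(\cdot-a_m)$, which attains the value $c_m=m$ at the center $a_m$. The only point needing a word of care is that the heat transform is defined by an absolutely convergent integral, and this is automatic here from the Schwartz regularity of $g_{R_m}$.
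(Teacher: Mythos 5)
Your proof is correct and follows essentially the same route as the paper: both rely on linearity and translation-equivariance of the heat transform (via the substitution $u=w-a_m$, which you state as a general lemma and the paper carries out inline) to reduce to $c_m\,a_{R_m}(0)=m$.
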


\begin{proof}
Using the definition of $g_m$ and the change of variables $u=w-a_m$, we obtain
\[
g_m^{(1/4)}(z)
=\pi^{-n}\int c_m g_{R_m}(w-a_m)e^{-|w-z|^2}\,dv(w)
=c_m\pi^{-n}\int g_{R_m}(u)e^{-|u-(z-a_m)|^2}\,dv(u)
=c_m\,g_{R_m}^{(1/4)}(z-a_m).
\]
By \eqref{eq:gR-quarter}, $g_{R_m}^{(1/4)}=a_{R_m}$, hence
\[
g_m^{(1/4)}(z)=c_m\,a_{R_m}(z-a_m).
\]
Evaluating at $z=a_m$ gives $g_m^{(1/4)}(a_m)=c_m a_{R_m}(0)$. Finally, $a_{R_m}(0)=\int \Phi(u)\,du=1$ by definition \eqref{eq:aR}, and $c_m=m$, so $g_m^{(1/4)}(a_m)=m$.
\end{proof}

\begin{lemma}\label{lem:offdiag}
There exists $m_0$ such that for all $m\ge m_0$,
\[
\sum_{j\ne m} |g_j^{(1/4)}(a_m)|\le \frac12.
\]
\end{lemma}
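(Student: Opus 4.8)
The plan is to estimate $|g_j^{(1/4)}(a_m)|$ for $j \neq m$ by exploiting the explicit formula $g_j^{(1/4)}(z) = c_j\, a_{R_j}(z - a_j)$ obtained in the proof of Lemma \ref{lem:peaks}, together with the fact that $a_{R_j}$ is Schwartz and decays rapidly away from the origin, while the centers $a_j = (e^{j^7}, 0, \dots, 0)$ are extremely widely separated. Concretely, $|g_j^{(1/4)}(a_m)| = c_j\, |a_{R_j}(a_m - a_j)| = j\,|a_{R_j}(a_m - a_j)|$, and the argument $a_m - a_j$ has modulus $|e^{m^7} - e^{j^7}|$, which for $j \neq m$ is enormous. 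So the task reduces to a quantitative Schwartz-decay bound on $a_{R_j}$, uniform enough to sum over $j$.

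First I would record a decay estimate for $a_R$: since $a_R(z) = \int \Phi(u) e^{iR u \cdot z}\,du$ with $\Phi \in C_c^\infty$, repeated integration by parts in $u$ gives, for any integer $N \ge 1$, a bound $|a_R(z)| \le C_N\, R^{-N} |z|^{-N}$ for $|z| \ge 1$, where $C_N$ depends only on $N$, $n$, and $\Phi$ (each integration by parts produces a factor $R^{-1}$ from differentiating $e^{iR u \cdot z}$ and costs a power of $|z|^{-1}$; the $C_c^\infty$ support keeps all the $u$-integrals finite). Actually, since $R_j = j^3 \ge 1$, even the crude bound $|a_{R_j}(z)| \le C_N |z|^{-N}$ suffices. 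Plugging in $z = a_m - a_j$ with $|z| = |e^{m^7} - e^{j^7}|$ gives
\[
|g_j^{(1/4)}(a_m)| \le C_N\, j\, |e^{m^7} - e^{j^7}|^{-N}.
\]

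Next I would split the sum over $j \neq m$ into $j < m$ and $j > m$. For $j \ge m+1$, $|e^{m^7} - e^{j^7}| \ge e^{j^7} - e^{m^7} \ge \tfrac12 e^{j^7}$ (since $m^7 \le (j-1)^7$ forces $e^{m^7} \le e^{j^7 - 1} \le \tfrac12 e^{j^7}$ once $j$ is moderately large), so these terms are bounded by $C_N\, j\, 2^N e^{-N j^7}$, which is summable with a tail that is $o(1)$ as $m \to \infty$; in fact for $m$ large the whole $j > m$ tail is $\le \tfrac14$. For $j \le m-1$, $|e^{m^7} - e^{j^7}| \ge e^{m^7} - e^{(m-1)^7} \ge \tfrac12 e^{m^7}$, so each such term is $\le C_N\, j\, 2^N e^{-N m^7}$, and summing the at most $m$ of them gives $\le C_N\, m^2\, 2^N e^{-N m^7}$, which tends to $0$ as $m \to \infty$; choose $m_0$ so that this is $\le \tfrac14$ for $m \ge m_0$. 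Combining the two pieces yields $\sum_{j \neq m} |g_j^{(1/4)}(a_m)| \le \tfrac12$ for all $m \ge m_0$.

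The only mild obstacle is being careful with the elementary inequality $e^{m^7} - e^{(m-1)^7} \ge \tfrac12 e^{m^7}$ (equivalently $e^{(m-1)^7 - m^7} \le \tfrac12$), which holds for all $m \ge 2$ since $(m-1)^7 - m^7 \le -1$ there; and symmetrically $e^{m^7} \le \tfrac12 e^{(m+1)^7}$ for $m \ge 1$. With $N$ fixed (say $N = 2$) and these gap bounds in hand, the summations are routine geometric-type estimates, so there is no substantive difficulty — the doubly-exponential spacing of the $a_m$ is doing all the work.
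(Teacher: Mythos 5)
Your proof is correct and follows essentially the same route as the paper: write $g_j^{(1/4)}(a_m)=c_j\,a_{R_j}(a_m-a_j)$, use Schwartz decay of $a_{R_j}$, and exploit the doubly-exponential separation of the centers $a_j$. The only inessential difference is that the paper keeps the full bound $|a_{R_j}(z)|\le C_N(1+R_j|z|)^{-N}$, retaining the extra factor $R_j^{-N}$, whereas you deliberately discard it; both versions are crushed by the $e^{-N m^7}$ and $e^{-N j^7}$ factors, so the conclusion is unaffected.
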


\begin{proof}
Since $a_1$ is Schwartz, for every integer $N\ge 0$ there exists $C_N$ such that
\[
|a_1(z)|\le C_N(1+|z|)^{-N},\qquad z\in\mathbb R^{2n}.
\]
Hence $a_{R}(z)=a_1(Rz)$ satisfies
\[
|a_R(z)|\le C_N(1+R|z|)^{-N}.
\]
Using the representation $g_j^{(1/4)}(a_m)=c_j a_{R_j}(a_m-a_j)$ from the proof of Lemma \ref{lem:peaks}, one obtains
\[
|g_j^{(1/4)}(a_m)|\le C_N\,j\,(1+R_j|a_m-a_j|)^{-N}.
\]
For $j<m$, $|a_m-a_j|\ge e^{m^7}-e^{j^7}\ge \tfrac12 e^{m^7}$ for $m$ large, hence
\[
\sum_{j<m} |g_j^{(1/4)}(a_m)|
\le C_N e^{-Nm^7}\sum_{j<m} j\,R_j^{-N}
\le C_N e^{-Nm^7}\sum_{j<m} j^{1-3N}\to 0.
\]
For $j>m$, $|a_m-a_j|\ge \tfrac12 e^{j^7}$, hence
\[
\sum_{j>m} |g_j^{(1/4)}(a_m)|
\le C_N\sum_{j>m} j\,(R_j e^{j^7})^{-N}
\le C_N\sum_{j>m} j^{1-3N}e^{-Nj^7}\to 0.
\]
The two limits imply the displayed sum is at most $1/2$ for all sufficiently large $m$.
\end{proof}

\begin{theorem}\label{thm:main}
There exists a measurable symbol $g$ on $\mathbb C^n$ such that:
\begin{enumerate}
\item $gk_a\in L^2(d\mu)$ for every $a\in\mathbb C^n$;
\item $T_g$ is bounded on $H^2(\mathbb C^n,d\mu)$;
\item $g^{(1/4)}$ is unbounded on $\mathbb C^n$.
\end{enumerate}
\end{theorem}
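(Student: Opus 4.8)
The plan is to take $g=\sum_{m\ge 1}g_m$ with $g_m$, $R_m$, $c_m$, $a_m$ as in \eqref{eq:params}--\eqref{eq:gm}, and to verify the three assertions by feeding the blockwise estimates into the summation lemma. First I would check the hypotheses of Lemma~\ref{lem:sum}: Lemma~\ref{lem:star-for-gm} gives that $(g_m)$ satisfies $(\star)$, and Lemma~\ref{lem:operator-sum} gives $\sum_m\|T_{g_m}\|<\infty$, using $\|T_{g_m}\|=c_m\|T_{g_{R_m}}\|\le C_0\,m\,m^{-3n}$ from Proposition~\ref{prop:block} together with the translation invariance $W_aT_gW_a^*=T_{g(\cdot-a)}$ of the Toeplitz norm. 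With these two inputs in hand, Lemma~\ref{lem:sum} produces a measurable $g$, equal to the $L^2(d\mu)$-sum $\sum_m g_m$, which automatically satisfies (1) $gk_a\in L^2(d\mu)$ for every $a$, and (2) $T_g$ extends to a bounded operator with $T_g=\sum_m T_{g_m}$. So (1) and (2) are immediate once the two summability lemmas are granted.

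For (3) I would invoke part~(3) of Lemma~\ref{lem:sum}, which asserts that for each $a$ the series $\sum_m g_m^{(1/4)}(a)$ converges absolutely to $g^{(1/4)}(a)$. Evaluating at the peak point $a=a_m$ and isolating the diagonal term,
\[
g^{(1/4)}(a_m)=g_m^{(1/4)}(a_m)+\sum_{j\ne m}g_j^{(1/4)}(a_m).
\]
Lemma~\ref{lem:peaks} gives $g_m^{(1/4)}(a_m)=c_m=m$, while Lemma~\ref{lem:offdiag} bounds the off-diagonal tail by $\tfrac12$ for all $m\ge m_0$. Hence $|g^{(1/4)}(a_m)|\ge m-\tfrac12\to\infty$, so $g^{(1/4)}$ is unbounded on $\mathbb C^n$, which is (3).

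Since every analytic estimate has already been isolated into a separate lemma, the proof of the theorem itself is a short assembly, and I expect no genuine obstacle at this stage. The real work sits upstream, in the choice of the three parameter scales in \eqref{eq:params}: $c_m=m$ is forced if we want the heat peaks to diverge at all; $R_m=m^3$ must be large enough that $c_m\|T_{g_{R_m}}\|\lesssim m^{1-3n}$ is summable (this is where $n\ge 1$ enters), yet the Gaussian tail bound \eqref{eq:gR-L2}, in which $\|g_R\|_2$ grows like $e^{R^2/16}$, forces the block centres $a_m$ to be separated on a vastly larger scale; taking $|a_m|\asymp e^{m^7}$ comfortably dominates both the $e^{R_m^2/8}=e^{m^6/8}$ factor appearing in \eqref{eq:tail} and the polynomial decay of the Schwartz function $a_1$ in Lemma~\ref{lem:offdiag}. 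Thus the one place requiring care is the mutual compatibility of these three rates, and that is precisely what Lemmas~\ref{lem:operator-sum}--\ref{lem:offdiag} were arranged to secure; the theorem then follows by combining them through Lemma~\ref{lem:sum}.
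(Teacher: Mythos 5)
Your proof is correct and follows essentially the same route as the paper: verify the two summability hypotheses via Lemmas~\ref{lem:operator-sum} and~\ref{lem:star-for-gm}, apply Lemma~\ref{lem:sum} to obtain (1) and (2), and then evaluate the termwise heat-transform identity at the peaks $a_m$ using Lemmas~\ref{lem:peaks} and~\ref{lem:offdiag} to get (3). Your closing discussion of why the three parameter scales in \eqref{eq:params} are mutually compatible is an accurate synopsis of the design constraints, though it is commentary rather than part of the proof.
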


\begin{proof}
Let $g_m$ be as in \eqref{eq:gm}. Lemmas \ref{lem:operator-sum} and \ref{lem:star-for-gm} verify the hypotheses of Lemma \ref{lem:sum}. Therefore the symbol sum $g=\sum_m g_m$ satisfies $gk_a\in L^2(d\mu)$ for all $a$ and $T_g=\sum_m T_{g_m}$ is bounded.

By Lemma \ref{lem:sum}(3),
\[
g^{(1/4)}(a_m)=\sum_{j=1}^\infty g_j^{(1/4)}(a_m).
\]
Using Lemma \ref{lem:peaks} and Lemma \ref{lem:offdiag}, for $m\ge m_0$,
\[
|g^{(1/4)}(a_m)| \ge |g_m^{(1/4)}(a_m)|-\sum_{j\ne m}|g_j^{(1/4)}(a_m)|\ge m-\frac12.
\]
Hence $\sup_{a\in\mathbb C^n}|g^{(1/4)}(a)|=\infty$.
\end{proof}

\subsection*{Acknowledgements}
The author thanks Otte Heinävaara for helpful discussions. The author also thanks Michael Hitrik for introducing the problem, Jeck Lim for comments on an earlier version of the manuscript, and Jared Wunsch for encouragement and advice on presentation. The author learned of this problem at the AIM workshop on Riemann--Hilbert problems, Toeplitz matrices, and applications, and was supported by an Olga Taussky and John Todd Fellowship.

\bibliographystyle{plain} 
\bibliography{bibliography}

\end{document}